\begin{document}
\newcommand{\hess}[1]{\ensuremath{Hess_{x_{#1}}}}

\title{Uniqueness and Monge solutions in the multi-marginal optimal transportation problem\footnote{The author was supported in part by an NSERC postgraduate scholarship.  This work was completed in partial fulfillment of the requirements for a doctoral degree in mathematics at the University of Toronto.}} \author{BRENDAN PASS \footnote{Department of Mathematics, University of Toronto, Toronto, Ontario, Canada, M5S 2E3 bpass@math.utoronto.ca.}}
\maketitle

\begin{abstract}We study a multi-marginal optimal transportation problem.  Under certain conditions on the cost function and the first marginal, we prove that the solution to the relaxed, Kantorovich version of the problem induces a solution to the Monge problem and that the solutions to both problems are unique.
\end{abstract}
\section{Introduction}

Given two Borel probability measures $\mu_1$ and $\mu_2$ on spaces $M_1$ and $M_2$ respectively and a cost function $c:M_1 \times M_2 \rightarrow \mathbb{R}$, Monge's optimal transportation problem asks how to most efficiently map 
$\mu_1$ onto $\mu_2$, where efficiency is measured relative to $c$.  We say that a function $G: M_1 \rightarrow M_2$ pushes $\mu_1$ forward to $\mu_2$ and write $G_{\#}\mu_1=\mu_2$ if $\mu_1(G^{-1}(B))=\mu_2(B)$ for all measurable $B \subseteq M_2$.  Monge's problem is then to minimize $\int_{M_1} c(x_1,G(x_1))d\mu_1$ among all $G$ that push $\mu_1$ forward to $\mu_2$.  Due to both its deep connections to other areas of mathematics and its applicability in other fields, optimal transportation has grown into a thriving field of research over the past 20 years; we refer the interested reader to the books by Villani for references \cite{V}\cite{V2}.

In this paper, we are interested in a multi-marginal version of the preceding problem.  Given Borel probability measures $\mu_i$ on $n$-dimensional, smooth manifolds $M_i$, for $i=1,2,...,m$, and a \emph{cost} function $c:M_1 \times M_2\times.... \times M_m\rightarrow \mathbb{R}$, the multi-marginal version of Monge's optimal transportation problem is to minimize: 
\begin{equation*}
C(G_2,G_3,...,G_m):=\int_{M_1}c(x_1,G_2(x_1),G_3(x_1),...,G_m(x_1))d\mu_1 \tag{\textbf{M}}
\end{equation*}
among all $(m-1)$-tuples of measurable maps $(G_2,G_3,...,G_m)$, where $G_i: M_1 \rightarrow M_i$ pushes $\mu_1$ forward to $\mu_i$ for all $i=2,3,...,m$.
The Kantorovich formulation of the problem is to minimize:
\begin{equation*}
C(\mu):=\int_{M_1 \times M_2\times ... \times M_m}c(x_1,x_2,x_3,...,x_m)d\mu \tag{\textbf{K}}
\end{equation*}
among all positive Borel measures $\mu$ on $M_1 \times M_2 \times ...\times M_m$ such that the canonical projection
\begin{equation*}
\pi_i:M_1 \times M_2 \times...\times M_m \rightarrow M_i
\end{equation*}
pushes $\mu$ forward to $\mu_i$ for all $i$.  For any $(m-1)$-tuple $(G_2,G_3,...,G_m)$ such that $G_{i\#}\mu_1=\mu_i$ for all $i=2,3,...,m$, we can define the measure $\mu=(Id,G_2,G_3,..G_M)_{\#}\mu_1$ on $M_1 \times M_2 \times...\times M_m$, where $Id:M_1 \rightarrow M_1$ is the identity map.  Then $\mu$ projects to $\mu_i$ for all $i$ and $C(G_2,G_3,...,G_m)=C(\mu)$; therefore, \textbf{K} can be interpreted as a relaxed version of \textbf{M}.  Roughly speaking, the difference between the two formulations is that in \textbf{M} almost every point $x_1 \in M_1$ is coupled with exactly one point $x_i \in M_i$ for each $i=2,3,...,m$, whereas in \textbf{K} an element of mass at $x_i$ is allowed to be \emph{split} between two or more target points in $M_i$ for $i=2,3,...,m$.

Assuming that $c$ is continuous, it is not hard to show that a solution to \textbf{K} exists.  When $m=2$, under a regularity condition on $\mu_1$ and a twist condition on $c$, which we will define in the next section, one can show that this solution is concentrated on the graph of a function over $x_1$ \cite{lev}\cite{g}\cite{bren}\cite{gm}\cite{Caf}.   It is then straightforward to show that this function solves \textbf{M} and to establish uniqueness results for both \textbf{M} and \textbf{K}.  When $m \geq 3$, however, existence and uniqueness in \textbf{M} as well as uniqueness in \textbf{K} are still largely open. In their seminal paper, Gangbo and \'Swi\c ech \cite{GS} used a duality theorem of Kellerer \cite{K} to resolve these questions for the quadratic cost function, $c(x_1,x_2,x_3,...,x_m) = -|\sum_{i=1}^m x_i|^2$ on $\mathbb{R}^n \times \mathbb{R}^n \times....\times\mathbb{R}^n$, extending partial results for the same cost by Olkin and Rachev \cite{OR}, Knott and Smith \cite{KS} and R\"uschendorf and Uckelmann \cite{RU}.  Gangbo and \'Swi\c ech's theorem was then reproved using a different argument by R\"uschendorf and Uckelmann\cite{RU2} and generalized by Heinich \cite{H} to cost functions of the form $c(x_1,x_2,x_3,...,x_m) = h(\sum_{i=1}^m x_i)$ where $h: \mathbb{R}^n \rightarrow \mathbb{R}$ is strictly concave.  Carlier extended these results to a still wider class of cost functions, but only in the case when all the domains $M_i$ are one dimensional \cite{C}.  A result with a somewhat different flavour was obtained by Carlier and Nazaret \cite{CN}.  Assuming $n=m$ and $M_i=\mathbb{R}^n$ for all $i$, their objective was to maximize a convex function of the determinant of the matrix whose columns are the vectors $x_i$; for this cost function, they demonstrate that the maximizer may not be concentrated on the graph of a function over $x_1$ and may not be unique.  The aim of the present article is to identify general conditions on $c$ under which both \textbf{K} and \textbf{M} admit unique solutions.  

With one exception, the conditions we impose will look similar to standard conditions which arise when studying the two marginal problem.  Our lone novel hypothesis is that a certain covariant $2$-tensor on the product space $M_2 \times M_2\times... \times M_{m-1}$ should be negative definite.  Tensors have recently become very relevant to optimal transportation, owing to their essential role in the study of the regularity of optimal maps.  Ma, Trudinger and Wang \cite{MTW} showed that a certain tensorial condition dictates the regularity of the solution to \textbf{M} in the two marginal case, and Kim and McCann \cite{KM} reinterpreted this condition, relating it to the sectional curvature of a certain pseudo-metric.  In \cite{P}, the present author showed that the dimension of the support of the optimizers for multi-marginal problems is related to a family of symmetric, bi-linear forms.  One surprising consequence of that work is a class of counterexamples demonstrating that the obvious generalization of the twist condition to the multi-marginal setting is sufficient neither to guarantee uniqueness of minimizers for \textbf{K} nor to ensure that the solution to \textbf{K} induces a solution to \textbf{M}.  In these examples, the solutions were concentrated on submanifolds of dimension greater than $n$; motivated in part by this observation, \cite{P} identified local conditions on $c$ under which the support of the optimal measure is at most $n$-dimensional.  Our condition here is a little different.  Whereas the question about the dimension of the support of a solution $\mu$ to \textbf{K} is purely local, showing that $\mu$ gives rise to a solution to \textbf{M} is a global issue: for almost all $x_1 \in M_1$ we must show that there is exactly one $(x_2,x_3,...,x_m) \in M_2 \times M_3 \times,...,M_m$ which get coupled to $x_1$ by $\mu$.  Our tensor here is designed to capture this global nature of the problem.

In the next section we recall relevant concepts from the theory of optimal transportation and formulate the conditions we will need.  In section 3 we state and prove our main result and in the fourth section we exhibit several examples of cost functions which satisfy the criteria of our main theorem.

\textbf{Acknowledgment:}  It is a pleasure to thank my advisor, Robert McCann, for his support and for many useful discussions throughout the duration of this work.

\section{Preliminaries and definitions}

We will assume that each $M_i$ can be smoothly embedded in some larger manifold in which its closure $\overline{M_i}$ is compact and that the cost $c \in C^2(\overline{M_1}\times\overline{M_2}\times...\times\overline{M_m})$.  In addition, we will assume that $M_i$ is a Riemannian manifold for $i=2,3,..m-1$ and that any two points can be joined by a smooth, length minimizing geodesic\footnote{Note that we do \emph{not} assume $M_i$ is complete, however, as we do not wish to exclude, for example, bounded, convex domains in $\mathbb{R}^n$.}, although no such assumptions will be needed on $M_1$ or $M_m$.  The requirement of a Riemannian structure is related to the global nature of $\textbf{M}$ that we alluded to in the introduction; a Riemannian metric gives us a natural way to connect any pair of points, namely geodesics. 

We will denote by $D_{x_i}c(x_1,x_2,...,x_m)$ the differential of $c$ with respect to $x_i$. For $i \neq j$, the bi-linear form $D^2_{x_ix_j}c(x_1,x_2,...,x_m)$ on $T_{x_i}M_i \times T_{x_j}M_j$ was introduced in $\cite{KM}$; in local coordinates, it is defined by 
\begin{equation*}
D^2_{x_ix_j}c\langle\frac{\partial}{\partial x_{i}^{\alpha_{i}}}, \frac{\partial}{\partial x_{j}^{\alpha_{j}}}\rangle =\frac{\partial ^{2} c}{\partial x_{i}^{\alpha_{i}}\partial x_{j}^{\alpha_{j}}}.
\end{equation*}
As $M_i$ is Riemannian for $i=2,...,m-1$, Hessians or unmixed, second order partial derivatives with respect to these coordinates make sense and we will denote them by $\hess{i}c(x_1,x_2,...,x_m)$; note, however, that no Riemannian structure is necessary to ensure the tensoriality of the mixed second order partials $D^2_{x_ix_j}c(x_1,x_2,...,x_m)$, as was observed in \cite{KM}.

Given a Borel probability measure $\mu$ on a topological space $M$, the support of $\mu$, which we will denote by $spt(\mu)$, is defined to be the smallest closed subset of $M$ such that $\mu(spt(\mu))=1$.

The dual problem to \textbf{K} is to maximize 
\begin{equation*}
\sum^m_{i=1}\int_{M_i} u_i(x_i)d\mu_i \tag{\textbf{D}}
\end{equation*}
among all $m$-tuples $(u_1,u_2,...,u_m)$ of functions $u_i \in L^1(\mu_i)$ for which $\sum_{i=1}^m u_i(x_i ) \leq c(x_1,...,x_m)$ for all $(x_1,...,x_m) \in M_1 \times M_2 \times ...\times M_m$.

There is a special class of functions satisfying the constraint in \textbf{D} that will be of particular interest to us:
\newtheorem{ccon}{Definition}[section]
\begin{ccon}
We say that an $m$-tuple of functions $(u_1,u_2,..u_m)$ is $c$-conjugate if for all $i$
\begin{equation*}
u_i(x_i)=\inf_{\substack{x_j \in M_j\\ j\neq i}}\Big( c(x_1,x_2,...,x_m)-\sum_{j \neq i}u_j(x_j)\Big)
\end{equation*}
\end{ccon}

Whenever $(u_1,u_2,..u_m)$ is $c$-conjugate, the $u_i$ are semi-concave and hence have super differentials  $\overline{\partial} u_i(x_i)$ at each point $x_i \in M_i$.  By compactness, for each $x_i \in M_i$ we can find $x_j \in \overline{M_j}$ for all $j \neq i$ such that $u(x_i)=c(x_1,x_2,...,x_m)-\sum_{j \neq i}u_j(x_j)$; furthermore, as long as $|u_i(x_i)|< \infty$ for at least one $x_i$, $u_i$ is locally Lipschitz \cite{M}. 
 
The following theorem makes explicit the link between the Kantorovich problem and its dual.

\newtheorem{duality}{Theorem}[section]

\begin{duality}\label{du}
There exists a solution $\mu$ to the Kantorovich problem and a $c$-conjugate solution $(u_1,u_2,...,u_m)$ to its dual.  Furthermore, the maximum value in \textbf{D} coincides with the minimum value in \textbf{K}.  Finally, for any solution $\mu$ to \textbf{K}, any $c$-conjugate solution $(u_1,u_2,...,u_m)$ to \textbf{D} and any $(x_1,...,x_m) \in spt(\mu)$ we have $\sum_{i=1}^mu_i(x_i) =c(x_1,...,x_m)$.
\end{duality}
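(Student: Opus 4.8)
The plan is to prove this standard duality theorem in three stages, following the now-classical scheme from two-marginal optimal transport adapted to the $m$-marginal setting. First I would establish the easy inequality $\sup(\textbf{D}) \le \min(\textbf{K})$: for any competitor $(u_1,\dots,u_m)$ in \textbf{D} and any competitor $\mu$ in \textbf{K}, integrating the constraint $\sum_i u_i(x_i) \le c(x_1,\dots,x_m)$ against $\mu$ and using that $\pi_{i\#}\mu = \mu_i$ gives $\sum_i \int u_i\, d\mu_i \le \int c\, d\mu = C(\mu)$. Taking sup on the left and inf on the right yields the inequality. Existence of a minimizer for \textbf{K} is also routine: the admissible set is nonempty (it contains the product measure $\mu_1 \otimes \cdots \otimes \mu_m$), weak-$*$ compact by tightness (all $\mu_i$ are tight, hence so is the family of $\mu$'s with those marginals, via Prokhorov) and closed, and $\mu \mapsto \int c\, d\mu$ is weak-$*$ continuous since $c \in C^2(\overline{M_1}\times\cdots\times\overline{M_m})$ is bounded and continuous; so the infimum is attained.

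The heart of the argument is the reverse inequality $\min(\textbf{K}) \le \sup(\textbf{D})$ together with the existence of a $c$-conjugate maximizer. Here I would invoke the Kellerer/Kantorovich duality theorem (as cited, Kellerer \cite{K}, used the same way by Gangbo–\'Swi\c ech \cite{GS}) to get that the two values coincide and that there exist $u_i \in L^1(\mu_i)$ attaining the supremum in \textbf{D}. To upgrade these to a $c$-conjugate tuple I would run the standard $c$-conjugation tightening: given any optimal $(u_1,\dots,u_m)$, replace $u_1$ by its $c$-transform $u_1^c(x_1) := \inf_{x_2,\dots,x_m}(c(x_1,\dots,x_m) - \sum_{j\ge 2} u_j(x_j))$, then replace $u_2$ by the analogous infimum holding the new $u_1$ and the old $u_3,\dots,u_m$ fixed, and so on, cycling through all indices. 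Each such replacement only increases the functional (or keeps it equal) while preserving admissibility, and after finitely many rounds the tuple is $c$-conjugate; the compactness of $\overline{M_j}$ and continuity of $c$ guarantee the infima are finite and attained, and the paper already records (citing \cite{M}) that the resulting $u_i$ are locally Lipschitz and semiconcave. Since the original tuple was optimal and the functional only went up, the $c$-conjugate tuple is still optimal.

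Finally, for the complementary slackness / support statement: let $\mu$ be any solution to \textbf{K} and $(u_1,\dots,u_m)$ any $c$-conjugate solution to \textbf{D}. Since the two optimal values agree, $\int (c(x_1,\dots,x_m) - \sum_i u_i(x_i))\, d\mu = C(\mu) - \sum_i \int u_i\, d\mu_i = 0$. The integrand is nonnegative everywhere by the \textbf{D}-constraint and continuous on the compact $\overline{M_1}\times\cdots\times\overline{M_m}$; a nonnegative continuous function with zero integral against $\mu$ must vanish on $spt(\mu)$, which is exactly the claim $\sum_i u_i(x_i) = c(x_1,\dots,x_m)$ on $spt(\mu)$.

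The main obstacle is the reverse inequality $\min(\textbf{K}) \le \sup(\textbf{D})$ — the genuine duality assertion — for which I would not attempt a from-scratch Hahn–Banach/minimax argument but instead cite Kellerer's theorem as the other authors in this area do; everything else (the easy inequality, existence in \textbf{K}, the conjugation tightening, and complementary slackness) is soft and follows from compactness, continuity of $c$, and the already-quoted regularity facts about $c$-conjugate functions.
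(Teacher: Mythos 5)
Your overall strategy matches the paper's: cite Kellerer for existence of optimizers in \textbf{K} and \textbf{D} and equality of their values, upgrade an arbitrary dual optimizer to a $c$-conjugate one by successive $c$-transforms, and deduce the support condition by integrating the nonnegative function $c-\sum_i u_i$ against $\mu$. The easy inequality, the existence of a Kantorovich minimizer, and the complementary-slackness argument are all fine and essentially as in the paper.

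The gap is in the conjugation step. You assert that ``after finitely many rounds the tuple is $c$-conjugate,'' but give no argument for why the cyclic $c$-transform procedure stabilizes, nor why the output is $c$-conjugate. Monotonicity plus admissibility, which is all you invoke, does not by itself show termination in finitely many rounds, and even passing to a limit would not obviously yield a $c$-conjugate tuple. The point the paper actually proves --- and this is the only non-soft step --- is that a \emph{single} forward sweep $i=1,\dots,m$ already produces a $c$-conjugate tuple. The argument requires a specific chain of inequalities: (i) the defining infimum for $u_m$ gives $\sum_{j=1}^m u_j\le c$ pointwise, hence $u_i\le\inf_{x_j,\,j\ne i}\bigl(c-\sum_{j\ne i}u_j\bigr)$ for every $i$; (ii) the defining infimum for $u_{i-1}$ forces $v_i\le u_i$; (iii) substituting $v_j\le u_j$ for $j>i$ back into the definition of $u_i$ gives the reverse inequality $u_i\ge\inf_{x_j,\,j\ne i}\bigl(c-\sum_{j\ne i}u_j\bigr)$, closing the loop. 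Your proposal needs this (or an equivalent termination argument) to justify that the tightened tuple is actually $c$-conjugate; as written it is asserted, not proved.
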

This result is well known in the two marginal case; for $m \geq 3$, the existence of solutions to \textbf{K} and \textbf{D} as well as the equality of their extremal values was proved in \cite{K}.  The remaining conclusions were proved for a special cost by Gangbo and \'Swi\c ech \cite{GS} and for a general, continuous cost when each $M_i=\mathbb{R}^n$ by Carlier and Nazaret \cite{CN}.  The same proof applies for more general spaces $M_i$; we reproduce it below in the interest of completeness.

\begin{proof}
As mentioned above, a proof of the existence of solutions $\mu$ to \textbf{K} and $(v_1,v_2,...,v_m)$ to \textbf{D} as well as the equality:
\begin{equation} \label{sol}
 \sum^m_{i=1}\int_{M_i} v_i(x_i)d\mu_i=\int_{M_1 \times M_2 ....\times M_m}c(x_1,x_2,x_3,...,x_m)d\mu 
\end{equation}
can be found in \cite{K}.  We use a convexification trick, also found in \cite{GS} and \cite{CN}, to build a $c$-conjugate solution to \textbf{D}.

Define 
\begin{equation*}
 u_1(x_1)=\inf_{\substack{x_j \in M_j \\ j \geq 2}} \Big(c(x_1,x_2,...,x_m) -\sum_{j=2}^{m} v_j(x_j)\Big)
\end{equation*}
and $u_i$ inductively by 
\begin{equation*}u_i(x_i)=\inf_{\substack{x_j \in M_j \\ j\neq i}} \Big(c(x_1,x_2,...,x_m) -\sum_{j=1}^{i-1} u_j(x_j) - \sum_{j =i+1}^m v_j(x_j)\Big)
 \end{equation*}
As 
\begin{equation*}
 u_m(x_m) = \inf_{\substack{x_j \in M_j \\ j\neq i}} \Big(c(x_1,x_2,...,x_m) -\sum_{j=1}^{m-1} u_j(x_j)\Big),
\end{equation*}
 we immediately obtain 
\begin{equation}\label{in1}
 u_i(x_i) \leq \inf_{\substack{x_j \in M_j \\ j\neq i}}\Big( c(x_1,x_2,...,x_m)-\sum_{j \neq i}u_j(x_j)\Big).
\end{equation}
The definition of $u_{i-1}$ implies that for all $(x_1,x_2,...x_m)$
\begin{equation*}
 v_i(x_i) \leq c(x_1,x_2,...,x_m) -\sum_{j=1}^{i-1} u_j(x_j) - \sum_{j =i+1}^m v_j(x_j)
\end{equation*}
Therefore, $v_i(x_i) \leq u_i(x_i)$.   It then follows that 

\begin{eqnarray*}
u_i(x_i) &=& \inf_{\substack{x_j \in M_j \\ j\neq i}} \Big(c(x_1,x_2,...,x_m) -\sum_{j=1}^{i-1} u_j(x_j) - \sum_{j =i+1}^m v_j(x_j) \Big) \\
& \geq & \inf_{\substack{x_j \in M_j \\ j\neq i}} \Big(c(x_1,x_2,...,x_m) -\sum_{j \neq i} u_j(x_j)\Big),
\end{eqnarray*}
which, together with (\ref{in1}), implies that $(u_1,u_2,...,u_m)$ is $c$-conjugate.  Now, we have  
\begin{eqnarray*}
 \sum^m_{i=1}\int_{M_i} v_i(x_i)d\mu_i &\leq& \sum^m_{i=1}\int_{M_i} u_i(x_i)d\mu_i\\
&=& \sum^m_{i=1}\int_{M_1 \times M_2 ....\times M_m} u_i(x_i)d\mu \\
& \leq & \int_{M_1 \times M_2 ....\times M_m}c(x_1,x_2,x_3,...,x_m)d\mu
\end{eqnarray*}
and so by (\ref{sol}) we must have

\begin{equation*} \label{c-sol}
 \sum^m_{i=1}\int_{M_i} u_i(x_i)d\mu_i=\sum^m_{i=1}\int_{M_1 \times M_2 ....\times M_m} u_i(x_i)d\mu=\int_{M_1 \times M_2 ....\times M_m}c(x_1,x_2,x_3,...,x_m)d\mu 
\end{equation*}

But because $\sum^m_{i=1} u_i(x_i) \leq c(x_1,x_2,x_3,...,x_m)$, we must have equality $\mu$ almost everywhere.  Continuity then implies equality holds on $spt(\mu)$.
\end{proof}
As a corollary to the duality theorem, we now prove a uniqueness result for the solution to \textbf{D}.  When $m=2$, this result, under the weak conditions on $c$ stated below, is due to Chiappori, McCann and Nesheim \cite{CMN}; for certain special, multi-marginal costs, it was proven by Gangbo and \'Swi\c ech \cite{GS} and Carlier and Nazaret \cite{CN}.  Although this result is tangential to the main goals of this article, we prove it here to emphasize that, whereas  uniqueness in \textbf{K} requires certain structure conditions on the cost, uniqueness in \textbf{D} depends only on the differentiability of $c$.

\newtheorem{unD}{Corollary}[section]
\begin{unD}
 Suppose the domains $M_i$ are all connected, that $c$ is continuously differentiable and that each $\mu_i$ is absolutely continuous with respect to local coordinates with a strictly positive density.  If $(v_1,v_2,...,v_m)$ and $(\overline{v_1},\overline{v_2},...,\overline{v_m})$ solve \textbf{D}, then there exist constants $t_i$ for $i=1,2.,,,m$ such that $\sum_{i=1}^mt_i=0$ and $v_i=\overline{v_i}+t_i$, $\mu_i$ almost everywhere, for all $i$.
\end{unD}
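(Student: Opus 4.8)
The plan is to first reduce to a statement about $c$-conjugate solutions, then exploit the equality on the support together with differentiability to pin down the gradients of the potentials, and finally use connectedness to upgrade ``equal gradients'' to ``equal up to a constant.'' First I would replace an arbitrary solution $(v_1,\dots,v_m)$ of \textbf{D} by the $c$-conjugate solution $(u_1,\dots,u_m)$ constructed in the proof of Theorem \ref{du}. Since $v_i \le u_i$ pointwise and $\sum_i \int v_i\, d\mu_i = \sum_i \int u_i\, d\mu_i$ (both equal the value of \textbf{K}), and each $\mu_i$ has strictly positive density, we get $v_i = u_i$ $\mu_i$-a.e.\ for every $i$; in particular $v_i = u_i$ on a dense subset of $M_i$, and since $c$-conjugate functions are continuous (indeed locally Lipschitz), we may as well assume both tuples are $c$-conjugate from the start. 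So it suffices to show: any two $c$-conjugate solutions differ by additive constants summing to zero.

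Next I would extract the gradient information. Fix $i$ and a point $x_i$ at which $u_i$ is differentiable (a.e.\ point, by semiconcavity / local Lipschitzness). By compactness there is a minimizing tuple, i.e.\ points $x_j \in \overline{M_j}$, $j \ne i$, with $u_i(x_i) = c(x_1,\dots,x_m) - \sum_{j\ne i} u_j(x_j)$; moreover $(x_1,\dots,x_m) \in spt(\mu)$ for the optimal $\mu$, and the function $y \mapsto c(y, x_{-i}) - \sum_{j\ne i} u_j(x_j)$ touches $u_i$ from above at $x_i$, so $D u_i(x_i) = D_{x_i} c(x_1,\dots,x_m)$. The same reasoning applied to $\overline u_i$ at the same differentiability point $x_i$ yields a (possibly different) minimizing tuple $(\overline x_1,\dots,\overline x_m) \in spt(\overline\mu)$ with $D\overline u_i(x_i) = D_{x_i} c(x_1, \overline x_2,\dots,\overline x_m)$ (for $i=1$; analogously for general $i$). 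To conclude $D u_i = D \overline u_i$ I would run the standard ``double touching'' argument: at a common differentiability point $x_i$ of $u_i$ and $\overline u_i$ one shows the two selected minimizing tuples can be taken to coincide, or more directly, that the directional derivatives agree. Concretely, for the two-marginal-style argument: since $u_i(x_i) + \sum_{j\ne i} u_j(x_j) = c$ with equality, and $\overline u_i + \sum \overline u_j = c$ on its own optimal support, one compares $u_i(x_i) - u_i(x_i') \le c(x_1,\dots) - c(x_1',\dots)$ type inequalities for nearby $x_i'$ using the \emph{same} fixed $x_{-i}$, divides by the displacement and lets it go to zero, obtaining $Du_i(x_i) = D_{x_i}c = D\overline u_i(x_i)$ because both one-sided derivative bounds are forced by the envelope representation. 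Thus $D u_i = D \overline u_i$ a.e.\ on $M_i$.

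Finally, I would integrate: $u_i - \overline u_i$ is locally Lipschitz with a.e.-vanishing gradient on the connected manifold $M_i$, hence constant; call the constant $t_i$ (up to redefining on the $\mu_i$-null set where they may differ, which is harmless since we only claim equality $\mu_i$-a.e.). To see $\sum_i t_i = 0$: pick any $(x_1,\dots,x_m) \in spt(\mu)$; then $\sum_i u_i(x_i) = c(x_1,\dots,x_m) = \sum_i \overline u_i(x_i)$ provided that point also lies in $spt(\overline\mu)$ — which need not hold, so instead I would integrate the identity $\sum_i u_i(x_i) = c = \sum_i \overline u_i(x_i) + \sum_i t_i$ holding $\mu$-a.e., or simply note $\sum_i \int u_i \,d\mu_i = \sum_i \int \overline u_i\, d\mu_i$ (both equal the optimal value) forces $\sum_i t_i = 0$. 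The main obstacle is the middle step: carefully justifying $Du_i(x_i) = D_{x_i}c(\text{minimizer})$ and that this is independent of which $c$-conjugate optimal tuple one uses, i.e.\ the envelope/first-order argument at points of differentiability — everything else (the reduction to $c$-conjugate potentials, integrating a null gradient on a connected set, the constant-sum bookkeeping) is routine.
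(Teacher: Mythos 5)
Your overall architecture matches the paper's: reduce to $c$-conjugate potentials via the convexification trick, pin down gradients by an envelope argument at a point where equality is attained, integrate zero gradient on a connected manifold, and recover $\sum t_i = 0$. The reduction step and the final integration/bookkeeping are all correct and routine, as you say.

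The gap is precisely in what you flag as ``the main obstacle,'' and you do not resolve it: you allow the minimizing tuple for $\overline u_i$ to lie in $spt(\overline\mu)$ and hence to differ from the tuple for $u_i$, and then gesture at a ``double touching'' argument or at showing the two tuples ``can be taken to coincide,'' without explaining how. The clean resolution, and the point of the last sentence of Theorem~\ref{du}, is that equality $\sum_j u_j(x_j) = c(x_1,\dots,x_m)$ holds on $spt(\mu)$ for \emph{any} fixed optimal $\mu$ and \emph{any} $c$-conjugate dual optimizer. So one fixes a single optimal $\mu$, picks a differentiability point $x_i$ of both $u_i$ and $\overline u_i$, chooses one tuple $(x_1,\dots,x_m)\in spt(\mu)$ with $i$th coordinate $x_i$, and then the \emph{same} tuple is a minimizer in the $c$-conjugate representation of both $u_i$ and $\overline u_i$. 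The envelope argument then gives $Du_i(x_i) = D_{x_i}c(x_1,\dots,x_m) = D\overline u_i(x_i)$ with no ambiguity. This same observation also makes your worry at the end unnecessary: you can evaluate both $\sum_i u_i$ and $\sum_i \overline u_i$ at a single point of $spt(\mu)$ to get $\sum_i t_i = 0$ directly; you do not need $(x_1,\dots,x_m)\in spt(\overline\mu)$, nor do you need to fall back on integrating (though that also works). Once you invoke Theorem~\ref{du} in its full strength the argument closes; without it, the ``double touching'' step is not actually justified.
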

\begin{proof}
 Using the convexification trick in the proof of Theorem \ref{du}, we can find $c$-conjugate solutions $(u_1,u_2,...,u_m)$ and $(\overline{u_1},\overline{u_2},...,\overline{u_m})$ to \textbf{D} such that $v_i(x_i) \leq u_i(x_i)$ and $\overline{v_i}(x_i) \leq \overline{u_i}(x_i)$ for all $x_i \in M_i$.  Now, as 
\begin{equation*}
 \sum^m_{i=1}\int_{M_i} v_i(x_i)d\mu_i = \sum^m_{i=1}\int_{M_i} u_i(x_i)d\mu_i
\end{equation*}
we must have $v_i=u_i$, $\mu_i$ almost everywhere.  Similarly, $\overline{v_i}=\overline{u_i}$, $\mu_i$ almost everywhere.  Now, choose $x_i \in M_i$ where $u_i$ and $\overline{u_i}$ are differentiable.  Then there exists $x_j$ for all $j \neq i$ such that 
\begin{equation*}
 (x_1,x_2,...,x_{i-1},x_i,x_{i+1}...,x_m) \in spt(\mu);
\end{equation*}
Theorem \ref{du} then yields 
\begin{equation*}
 u_i(x_i) -c(x_1,x_2,...,x_{i-1},x_i,x_{i+1}...,x_m)= -\sum_{j \neq i}u_j(x_j).
\end{equation*}
  Because 
\begin{equation*}
 u_i(z_i) - c(x_1,x_2,...,x_{i-1},z_i,x_{i+1}...,x_m) \leq -\sum_{j \neq i}u_j(x_j)
\end{equation*}
 for all other $z_i \in M_i$ we must have 
\begin{equation*}
 Du_i(x_i) =D_{x_i}c(x_1,x_2,...,x_{i-1},x_i,x_{i+1}...,x_m). 
\end{equation*}
 Similarly, 
\begin{equation*}
 D\overline{u_i}(x_i) =D_{x_i}c(x_1,x_2,...,x_{i-1},x_i,x_{i+1}...,x_m),
\end{equation*}
hence $Du_i(x_i)=D\overline{u_i}(x_i)$.  As this equality holds for almost all $x_i$ we conclude $u_i(x_i)=\overline{u_i}(x_i)+t_i$ for some constant $t_i$.  Choosing any $(x_1,x_2,...,x_m) \in \text{spt}(\mu)$ and noting that  
\begin{equation*}
 \sum_{i=1}^mu_i(x_i) =c(x_1,x_2,...,x_m)=\sum_{i=1}^m\overline{u_i}(x_i),
\end{equation*}
we obtain $\sum_{i=1}^mt_i=0$.
\end{proof}
 
The next two definitions are straightforward generalizations of concepts borrowed from the two marginal setting.

\newtheorem{twist}[duality]{Definition}
\begin{twist}
For $i \neq j$, we say that $c$ is $(i,j)$-twisted if  the map $x_j \in M_j \mapsto D_{x_i}c(x_1,x_2,...,x_m) \in T_{x_i}^{*}M_i$ is injective, for all fixed $x_k$, $k \neq j$.
\end{twist}

\newtheorem{non-degeneracy}[duality]{Definition}
\begin{non-degeneracy}
We say that $c$ is $(i,j)$-non-degenerate if $D^2_{x_ix_j}c(x_1,x_2,...,x_m)$, considered as a map from $T_{x_j}M_j$ to $T^*_{x_i}M_i$, is injective for all $(x_1,x_2,...,x_m)$.  
\end{non-degeneracy}

In local coordinates, non-degeneracy simply means that the corresponding matrix of mixed, second order partial derivatives has a non-zero determinant.  When this condition holds, the inverse map $T^*_{x_i}M_i \rightarrow T_{x_j}M_j$ will be denoted by $(D^2_{x_ix_j}c)^{-1}(x_1,x_2,...,x_m)$.

When $m=2$, the non-degeneracy condition is not needed to ensure the existence of an optimal map (although it plays an important role in studying the regularity of that map).  On the other hand, the twist condition plays an essential role in showing that Monge's problem has a solution; it ensures that a first order, differential condition arising from the duality theorem can be solved uniquely for one variable as a function of the other \cite{lev} (see also \cite{bren}, \cite{gm} and \cite{Caf}).  In light of this, one might expect that, for $m \geq 3$, if $c$ is $(i,j)$-twisted for all $i \neq j$, then the Kantorovich solution $\mu$ induces a Monge solution.  This is not true, as our example in \cite{P} demonstrates.  In the multi-marginal problem, duality yields $m$ first order conditions; our strategy in this paper is to show that if we fix the first variable, these equations can be uniquely solved for the other $m-1$ variables.  In the problems considered by Gangbo and \'Swi\c ech \cite{GS} and Heinich \cite{H}, these equations turn out to have a particularly simple form and can be solved explicitly.  For more general cost functions, this becomes a much more subtle issue.  Our proof will combine a second order, differential condition with tools from convex analysis and will require that the tensor $T$, defined below, is negative definite.

\newtheorem{tensor}[duality]{Definition}

\begin{tensor}
Suppose $c$ is $(1,m)$-non-degenerate.  Let $\vec{y}=(y_1,y_2,...,y_m) \in M_1 \times M_2 \times...\times M_m$.  For each $i:=2,3,...,m-1$ choose a point $\vec{y}(i)=(y_1(i),y_2(i),...,y_m(i)) \in \overline{M_1} \times \overline{M_2} \times...\times \overline{M_m}$ such that $y_i(i)=y(i)$.  Define the following bi-linear maps on $T_{y_2}M_2 \times T_{y_3}M_3 \times ...\times T_{y_{m-1}}M_{m-1}$:

\begin{equation*}
 S_{\vec{y}}=-\sum_{j=2}^{m-1} \sum_{\substack {i=2 \\ i \neq j}}^{m-1}D^2_{x_ix_j}c(\vec{y}) +\sum_{i,j=2}^{m-1}(D^2_{x_ix_m}c(D^{2}_{x_1x_m}c)^{-1}D^2_{x_1x_j}c)(\vec{y})
\end{equation*}

\begin{equation*}
 H_{\vec{y},\vec{y}(2),\vec{y}(3),...,\vec{y}(m-1)}=\sum_{i=2}^{m-1}(\hess{i}c(\vec{y}(i))-\hess{i}c(\vec{y}))
\end{equation*}

\begin{eqnarray*}
T_{\vec{y},\vec{y}(2),\vec{y}(3),...,\vec{y}(m-1)}=S_{\vec{y}}+H_{\vec{y},\vec{y}(2),\vec{y}(3),...,\vec{y}(m-1)}
\end{eqnarray*}
\end{tensor}
Note that $D^2_{x_ix_j}c(x_1,x_2,...,x_m), \hess{i}c(x_1,x_2,...,x_m)$ and the composition $\big(D_{x_ix_m}c(D^{2}_{x_1x_m}c)^{-1}D^2_{x_1x_j}c\big)(x_1,x_2,...,x_m)$ are actually bi-linear maps on the spaces $T_{x_i}M_i \times T_{x_j}M_j$, $T_{x_i}M_i \times T_{x_i}M_i$ and $T_{x_i}M_i \times T_{x_j}M_j$, respectively, but we can extend them to maps on the product space $(T_{x_2}M_2 \times T_{x_3}M_3 \times ...\times T_{x_{m-1}}M_{m-1})^2$ by considering only the appropriate components of the tangent vectors.  

Though $T$ looks complicated, it appears naturally in our argument.  The condition $T<0$ is in one sense analogous to the twist and non-degeneracy conditions that are so important in the two marginal problem.  Like the non-degeneracy condition, negativity of $S$ is an inherently local property on $M_1 \times M_2 \times ...\times M_m$; under this condition, one can show that our system of equations is locally uniquely solvable.  To show that the solution is actually globally unique requires something more; in the two marginal case, this is the twist condition, which can be seen as a global extension of non-degeneracy.  In our setting, requiring that the sum $T=S+H<0$ turns out to be enough to ensure that the locally unique solution is in fact globally unique.

\section{Monge solutions}

\newtheorem{main}{Theorem}[section]
We are now in a position to precisely state our main theorem:
\begin{main}\label{mon}
Suppose that:
\begin{enumerate}
\item $c$ is $(1,m)$-non-degenerate. 
\item $c$ is $(1,m)$-twisted.
\item For all choices of $\vec{y}=(y_1,y_2,...,y_m) \in M_1 \times M_2 \times...\times M_m$ and of $\vec{y}(i)=(y_1(i),y_2(i),...,y_m(i)) \in \overline{M_1}\times \overline{M_2} \times...\times \overline{M_m}$ such that $y_i(i)=y_i$ for $i=2,...,m-1$, we have 

\begin{equation}
T_{\vec{y},\vec{y}(2),\vec{y}(3),...,\vec{y}(m-1)} < 0. 
\end{equation}
\item The first marginal $\mu_1$ does not charge sets of Hausdorff dimension less than or equal to $n-1$.
\end{enumerate}
Then any solution $\mu$ to the Kantorovich problem is concentrated on the graph of a function; that is, there exist functions $G_i:M_1 \rightarrow M_i$ such that $\text{graph}(\vec{G})=\{(x_1,G_2(x_1),G_3(x_1),...,G_m(x_1))\}$ satisfies $\mu(\text{graph}(\vec{G}))=1$ 
\end{main}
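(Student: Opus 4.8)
The plan is to fix a $c$-conjugate solution $(u_1,\dots,u_m)$ to \textbf{D} (which exists by Theorem \ref{du}) and to show that for $\mu_1$-almost every $x_1$ the point $(x_2,\dots,x_m)$ with $(x_1,\dots,x_m)\in\mathrm{spt}(\mu)$ is uniquely determined. Since $c$ is $(1,m)$-twisted and $(1,m)$-non-degenerate, the standard two-marginal argument shows it suffices to pin down $(x_2,\dots,x_{m-1})$: indeed at a point where $u_1$ is differentiable, $Du_1(x_1)=D_{x_1}c(x_1,\dots,x_m)$, and once $x_2,\dots,x_{m-1}$ are fixed the twist condition in the $(1,m)$ pair recovers $x_m$ uniquely, hence all of $\vec G(x_1)$. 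By Assumption 4 and semiconcavity of the $u_i$ (so that $u_1$ is differentiable $\mathcal H^n$-a.e., hence $\mu_1$-a.e.), we may restrict attention to $x_1$ where $u_1$ is differentiable.

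The heart of the matter is the following uniqueness claim: if $(x_1,x_2,\dots,x_m)$ and $(x_1,\bar x_2,\dots,\bar x_m)$ both lie in $\mathrm{spt}(\mu)$ with the same first coordinate, then $(x_2,\dots,x_{m-1})=(\bar x_2,\dots,\bar x_{m-1})$. First I would extract the first-order information from Theorem \ref{du}: for each $i=2,\dots,m-1$, since $u_i(x_i)=\inf_{z_j}\big(c(x_1,\dots,z_i=x_i,\dots)-\sum_{j\neq i}u_j(x_j)\big)$ is attained at the remaining coordinates of the point in $\mathrm{spt}(\mu)$, and similarly for the barred point, one gets that $D_{x_i}c(x_1,x_2,\dots,x_m)\in\overline\partial u_i(x_i)$ and $D_{x_i}c(x_1,\bar x_2,\dots,\bar x_m)\in\overline\partial u_i(\bar x_i)$. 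Then I would connect $x_i$ to $\bar x_i$ by a length-minimizing geodesic $\gamma_i$ in $M_i$ (using the standing Riemannian hypothesis on $M_2,\dots,M_{m-1}$) and consider the scalar function obtained by summing, over $i=2,\dots,m-1$, the values of $c$ with $x_1$ and $x_m$ held fixed but with the middle coordinates moved along the geodesics — more precisely a function of $(t_2,\dots,t_{m-1})\in[0,1]^{m-1}$ built from $c(x_1,\gamma_2(t_2),\dots,\gamma_{m-1}(t_{m-1}),x_m)$ together with the subtracted potentials $\sum u_i(\gamma_i(t_i))$, with $x_m$ eliminated as a function of the middle variables via the $(1,m)$-non-degenerate implicit function relation $Du_1(x_1)=D_{x_1}c$. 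The concavity of the $u_i$ along geodesics (semiconcavity gives one-sided bounds) combined with the computation of the Hessian of this reduced function is where the tensor $T$ enters: a direct calculation identifies the second-order form governing this function with $T_{\vec y,\vec y(2),\dots,\vec y(m-1)}$, where $\vec y$ is the point along the geodesics and $\vec y(i)$ records the "frozen" evaluation points coming from the $c$-conjugacy infima (this is exactly why $T$ is defined with the auxiliary points $\vec y(i)$). The mixed terms $D^2_{x_ix_j}c$ and the Schur-type correction $D^2_{x_ix_m}c(D^2_{x_1x_m}c)^{-1}D^2_{x_1x_j}c$ arise from differentiating twice in the middle variables after substituting $x_m=x_m(x_2,\dots,x_{m-1})$; the $\hess{i}$ differences arise from comparing the genuine Hessian of $c$ at $\vec y$ with the one-sided concavity estimate for $u_i$, which is controlled by $c$ at the frozen point $\vec y(i)$.

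Granting this identification, the endgame is a convex-analysis argument: the reduced scalar function has a critical point at both endpoints $t=(0,\dots,0)$ (corresponding to the unbarred point) and $t=(1,\dots,1)$ (the barred point) — because both are minimizers in the respective $c$-conjugacy infima — while $T<0$ forces it to be strictly concave along the segment joining these endpoints in $[0,1]^{m-1}$. A strictly concave function on a segment cannot have two interior-or-endpoint critical points unless they coincide, which yields $\gamma_i(0)=\gamma_i(1)$, i.e. $x_i=\bar x_i$ for $i=2,\dots,m-1$, and then $x_m=\bar x_m$ by $(1,m)$-twistedness. Thus $\mu$-a.e. point is determined by its first coordinate, so $\mu$ is concentrated on a graph; setting $G_i(x_1)=x_i$ on this full-measure set and extending arbitrarily gives the maps $G_i$. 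The main obstacle, and the step I would spend the most care on, is the exact Hessian computation that produces $T$: one must handle the implicit elimination of $x_m$, justify differentiating the geodesic variations twice (smoothness of $c$ and of geodesics is fine, but the potentials $u_i$ are only semiconcave, so the argument must be phrased as inequalities, using $\overline\partial u_i$ and the supporting-function characterization rather than genuine derivatives of $u_i$), and check that the boundary evaluation points land in $\overline{M_i}$ so that the $C^2$ extension of $c$ can be invoked — this is precisely the role of the compactness of the $\overline{M_i}$ and the appearance of closures in the hypothesis on $\vec y(i)$.
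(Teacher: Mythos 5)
Your proposal follows essentially the same route as the paper: fix a $c$-conjugate dual solution, restrict to points where $u_1$ is differentiable, use $(1,m)$-twist and non-degeneracy to eliminate $x_m$ via the implicit relation $Du_1(x_1)=D_{x_1}c$, connect two putative matches by geodesics in $M_2,\dots,M_{m-1}$, and compare the one-sided second-order estimate from $c$-conjugacy (which produces the frozen evaluation points $\vec y(i)$ and the Hessian-difference part $H$) with the exact second derivative of $D_{x_i}c$ along the geodesics after substituting $x_m=F_{x_1}(\cdot)$ (which produces the Schur-complement part $S$). The paper phrases the endgame as $f(1)-f(0)\le\int T<0$ for the function $f(t)=\sum_i[V_i(t)-D_{x_i}c(x_1,\vec\gamma(t))]\langle\dot\gamma_i\rangle$ with supergradient selections $V_i$, while you phrase it as strict concavity of a reduced potential with critical points at both endpoints; these are the same argument, and you correctly flag the two places requiring care (phrasing everything through superdifferentials rather than genuine derivatives, and invoking compactness of $\overline{M_i}$ so the frozen points stay where $c$ is $C^2$).
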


\begin{proof}

Let $u_i$ be a $c$-conjugate solution to the dual problem.  Now, $u_1$ is semi-concave and hence differentiable off a set of Hausdorff dimension $n-1$; as $\mu_1$ vanishes on every set of Hausdorff dimension less than or equal to $n-1$, by Theorem \ref{du} it suffices to show that for every $x_1 \in M_1$ where $u_1$ is differentiable, there is at most one $(x_2,x_3,...,x_m) \in M_2 \times M_3 \times ...\times M_m$ such that $\sum_{i=1}^m u_i(x_i)=c(x_1,x_2,x_3,...,x_m)$. Note that this equality implies that $D_{x_i}c(x_1,x_2,...,x_m) \in \overline{\partial} u_i(x_i)$ for all $i=1,2...,m$; in particular, as $u_1$ is differentiable at $x_1$, $Du_1(x_1)=D_{x_1}c(x_1,x_2,...,x_m)$.  Our strategy will be to show that these inclusions can hold for at most one $(x_2,x_3,...,x_m)$.

Fix a point $x_1$ where $u_1$ is differentiable.  Twistedness implies that the equation $Du_1(x_1)=D_{x_1}c(x_1,x_2,...,x_m)$ defines $x_m$ as a function $x_m=F_{x_1}(x_2,...,x_{m-1})$ of the variables $x_2,x_3,...,x_{m-1}$; non-degeneracy and the implicit function theorem then imply that $F_{x_1}$ is continuously differentiable with respect to $x_2,x_3,...,x_{m-1}$ and
\begin{equation*} 
D_{x_i}F_{x_1}(x_2,...,x_{m-1})=-\big((D^{2}_{x_1x_m}c)^{-1}D^2_{x_1x_i}c)(x_1,x_2,...,F_{x_1}(x_2,...,x_{m-1})\big) 
\end{equation*}
for $i=2,...,m-1$.  We will show that there exists at most one point $(x_2,x_3,...,x_{m-1}) \in M_2 \times M_3 \times...\times M_{m-1}$ such that 
\begin{equation*}
D_{x_i}c(x_1,x_2,...,F_{x_1}(x_2,..x_{m-1})) \in \overline{\partial} u_i(x_i) 
\end{equation*}
for all $i=2,...,m-1$.

The proof is by contradiction; suppose there are two such points, $(x_2,x_3,...,x_{m-1})$ and $(\overline{x_2},\overline{x_3},...,\overline{x_{m-1}})$.  For $i=2,...,m-1$, we can choose Riemannian geodesics $\gamma_i(t)$ in $M_i$ such that $\gamma_i(0)=x_i$ and $\gamma_i(1)=\overline{x_i}$.  Take a measurable selection of covectors $V_i(t) \in \partial u_i(\gamma_i(t))$.  We will show that $f(1) <f(0)$, where
\begin{equation*}
f(t):=\sum_{i=2}^{m-1}[V_i(t)-D_{x_i}c(x_1,\vec{\gamma}(t)]\langle\frac{d\gamma_i}{dt}\rangle
\end{equation*}
and we have used $(x_1, \vec{\gamma}(t))$ as a shorthand for $(x_1,\gamma_2(t) ,...,\gamma _{m-1}(t),F_{x_1}(\gamma_2(t),...,\gamma_{m-1}(t)))$ and $a<b>$ to denote denote the duality pairing between a $1$-form $a$ and a vector $b$.  This will clearly imply the desired result.

For each $t$ and each $i=2,...,m-1$, by $c$-conjugacy of $u_i$ and the compactness of $\overline{M_j}$ for $j \neq i$, we can choose $\vec{y}(i;t)=(y_1(i;t),y_2(i;t),...,y_m(i:t)) \in \overline{M_1} \times \overline{M_2}\times...\times \overline{M_m}$ so that $y_i(i;t)=\gamma_i(t)$ and 
\begin{equation*}
\sum_{j=1}^{m} u_j(y_j(i;t))=c(y_1(i;t),y_2(i;t),...,y_m(i;t))  
\end{equation*}
Note that $V_i(t)\langle\frac{d\gamma_i}{dt}\rangle$ supports the semi-concave function $T \in [0,1] \mapsto u_i(\gamma_i(t))$.  But $u_i(\gamma_i(t))$ is twice differentiable almost everywhere and hence we have $V_i(t)\langle\frac{d\gamma_i}{dt}\rangle= \frac{d(u_i(\gamma_i(t)))}{dt}$ for almost all $t$ and, by semi-concavity, $V_i(1)\langle\frac{d\gamma_i}{dt}\rangle-V_i(0)\langle\frac{d\gamma_i}{dt}\rangle \leq \int_{0}^{1} \frac{d^2(u_i(\gamma_i(t)))}{dt^2}dt$.  Now, for any $t,s\in[0,1]$
\begin{equation*}
u_i(\gamma_i(t)) \leq c(y_1(i;s),y_2(i;s),...,y_{i-1}(i;s),\gamma_i(t),y_{i+1}(i;s)...,y_m(i;s)) -\sum_{j \neq i} u_j(y_j(i;s))
\end{equation*} 
and we have equality when $t=s$, as $\gamma_i(s)=y_i(i;s)$.  Hence, whenever $\frac{d^2(u_i(\gamma_i(t)))}{dt^2}$ exists, we have 
\begin{eqnarray*}
\frac{d^2(u_i(\gamma_i(t)))}{dt^2}\Bigg|_{t=s} &\leq& \frac{d^2(c(y_1(i;s),y_2(i;s),...,y_{i-1}(i;s),\gamma_i(t),y_{i+1}(i;s)...,y_m(i;s)) )}{dt^2}\Bigg|_{t=s}\\
&=&\hess{i}c(y_1(i;s),y_2(i;s),...,y_m(i;s))\langle\frac{d\gamma_i}{ds},\frac{d\gamma_i}{ds}\rangle
\end{eqnarray*}
We conclude that 
\begin{equation}\label{subdif}
V_i(1)\langle\frac{d\gamma_i}{dt}\rangle-V_i(0)\langle\frac{d\gamma_i}{dt}\rangle \leq \int_0^1\hess{i}c(y_1(i;t),y_2(i;t),...,y_m(i;t))\langle\frac{d\gamma_i}{dt},\frac{d\gamma_i}{dt}\rangle dt
\end{equation}

Turning now to the other term in $f(1)-f(0)$, we have 
\begin{eqnarray}
&&D_{x_i}c(x_1,\vec{\gamma(1)})\langle\frac{d\gamma_i}{dt}\rangle-D_{x_i}c(x_1,\vec{\gamma(0)})\langle\frac{d\gamma_i}{dt}\rangle \nonumber \\
&=&\int_0^1 \frac{d}{dt}\big( D_{x_i}c(x_1,\vec{\gamma}(t))\langle\frac{d\gamma_i}{dt}\rangle \big) dt \nonumber \\
&=& \int_0^1 \Bigg( \sum_{\substack {j=2 \\ j\neq i}}^{m-1}\Big( D^2_{x_ix_j}c(x_1,\vec{\gamma}(t))\Big)\langle\frac{d\gamma_i}{dt},\frac{d\gamma_j}{dt}\rangle\ +\hess{i}c(x_1,\vec{\gamma}(t))\langle\frac{d\gamma_i}{dt},\frac{d\gamma_i}{dt}\rangle\nonumber \\
& &+\sum_{\substack {j=2}}^{m-1}\Big(D^2_{x_ix_m}c(x_1,\vec{\gamma}(t))D_{x_j}F_{x_1}(\vec{\gamma}(t))\Big)\langle\frac{d\gamma_i}{dt},\frac{d\gamma_j}{dt}\rangle \Bigg)dt\nonumber\\
&=&\int_0^1 \Bigg ( \sum_{\substack {j=2\\j \neq i}}^{m-1} \Big( D^2_{x_ix_j}c(x_1,\vec{\gamma}(t))\Big)\langle\frac{d\gamma_i}{dt},\frac{d\gamma_j}{dt}\rangle +\hess{i}c(x_1,\vec{\gamma}(t))\langle\frac{d\gamma_i}{dt},\frac{d\gamma_i}{dt}\rangle \nonumber \label{smooth} \\ 
& & -\sum_{\substack {j=2}}^{m-1}\Big((D^2_{x_ix_m}c(D^{2}_{x_1x_m}c)^{-1}D^2_{x_1x_j}c)(x_1,\vec{\gamma}(t))\Big)\langle\frac{d\gamma_i}{dt},\frac{d\gamma_j}{dt}\rangle \Bigg) dt 
\end{eqnarray}

Combining (\ref{subdif}) and (\ref{smooth}) yields
\begin{eqnarray*}
f(1)-f(0) & \leq\int_0^1 T_{\big(x_1,\vec{\gamma}(t)\big),\vec{y}(2;t),\vec{y}(3;t),...,\vec{y}(m-1;t)}\langle \frac{d\vec{\gamma}}{dt},\frac{d\vec{\gamma}}{dt}\rangle dt \\
&<0
\end{eqnarray*}

\end{proof}

\newtheorem{monge}[main]{Corollary}
\begin{monge}
 Under the same conditions as Theorem 1, the Monge problem \textbf{M} admits a unique solution and the solution to the Kantorovich problem \textbf{K} is unique.  
\end{monge}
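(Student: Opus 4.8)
The plan is to derive both uniqueness statements, together with the solvability of \textbf{M}, directly from Theorem \ref{mon} and the convex structure of the Kantorovich problem; essentially no new analysis is needed.

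First I would record the elementary fact that $\inf\textbf{M}=\min\textbf{K}$. One direction is immediate: any $(m-1)$-tuple $(G_2,\dots,G_m)$ admissible for \textbf{M} induces the admissible measure $(\mathrm{Id},G_2,\dots,G_m)_\#\mu_1$ for \textbf{K} with the same cost, so $\min\textbf{K}=\inf\textbf{K}\le\inf\textbf{M}$ (here $\min\textbf{K}$ is attained by Theorem \ref{du}). For the reverse I would take any Kantorovich minimizer $\mu$; Theorem \ref{mon} says it is concentrated on $\mathrm{graph}(\vec G)$ for some map $\vec G=(G_2,\dots,G_m)$ (after passing to a Borel representative, which alters neither $\mu$ nor anything below). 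Since $\pi_{i\#}\mu=\mu_i$, each $G_i$ pushes $\mu_1$ forward to $\mu_i$, so $\vec G$ is admissible for \textbf{M} with $C(G_2,\dots,G_m)=C(\mu)=\min\textbf{K}$; hence $\inf\textbf{M}\le\min\textbf{K}$, equality holds, and $\vec G$ actually solves \textbf{M}, so \textbf{M} is solvable.

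Next I would prove uniqueness in \textbf{K}. Given two Kantorovich minimizers $\mu,\mu'$, their average $\bar\mu:=\tfrac12(\mu+\mu')$ is again feasible (the feasible set of \textbf{K} is convex) and minimizing (the cost is linear), so Theorem \ref{mon} provides a map $\vec G$ with $\bar\mu(\mathrm{graph}(\vec G))=1$. Since $\mu\le 2\bar\mu$, also $\mu(\mathrm{graph}(\vec G))=1$; and a probability measure concentrated on the graph of a function over $x_1$ whose first marginal is $\mu_1$ is necessarily $(\mathrm{Id},\vec G)_\#\mu_1$. The same applies to $\mu'$, so $\mu=\mu'$. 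Finally, for uniqueness in \textbf{M}: if $(G_2,\dots,G_m)$ and $(G_2',\dots,G_m')$ both solve \textbf{M}, then $\mu:=(\mathrm{Id},\vec G)_\#\mu_1$ and $\mu':=(\mathrm{Id},\vec G')_\#\mu_1$ are admissible for \textbf{K} with cost $\min\textbf{M}=\min\textbf{K}$ by the first step, hence are Kantorovich minimizers; by the uniqueness just shown $\mu=\mu'$, so their common value is concentrated on $\mathrm{graph}(\vec G)\cap\mathrm{graph}(\vec G')$, which forces $\vec G=\vec G'$ $\mu_1$-almost everywhere.

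I do not expect a genuine obstacle here: the entire difficulty of the result sits in Theorem \ref{mon}. The only points demanding (routine) care are the measurable-selection step used to replace ``$\mu$ is concentrated on the graph of a function'' by an honest Borel map $\vec G$ with $\mu=(\mathrm{Id},\vec G)_\#\mu_1$, and the elementary observation that a measure dominated by a multiple of $\bar\mu$ inherits the property of being concentrated on $\bar\mu$'s graph; both are standard, and I would expect the written proof to be short.
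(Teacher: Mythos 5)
Your proposal is correct, and the bulk of it — deducing that $\vec G$ pushes $\mu_1$ forward to $\mu_i$, that $\vec G$ solves \textbf{M}, and that uniqueness of the Monge solution follows from uniqueness of the Kantorovich solution — matches the paper's argument almost step for step. The one genuine variation is in your proof of uniqueness for \textbf{K}: you use the averaging trick (apply Theorem \ref{mon} to $\bar\mu=\tfrac12(\mu+\mu')$ and use $\mu\le 2\bar\mu$), whereas the paper simply asserts that any other optimizer $\bar\mu$ is concentrated on \emph{the same} graph $\vec G$. The paper's shortcut is justified because, in its proof of Theorem \ref{mon}, the graph is defined from a fixed $c$-conjugate dual solution $(u_1,\dots,u_m)$ via the equality set $\{\sum_i u_i(x_i)=c\}$, and by the duality theorem every \textbf{K}-minimizer concentrates on that same set; but this is only visible from the \emph{proof} of Theorem \ref{mon}, not its statement. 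Your averaging argument is a bit more self-contained — it relies only on the statement of Theorem \ref{mon}, the convexity of the feasible set, and the linearity of the cost — at the modest price of an extra line. Both routes are sound, and both quietly pass over the same routine measurable-selection point that you flag, so your proposal is a faithful and, if anything, slightly more careful version of the paper's proof.
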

\begin{proof}
 We first show that the $G_i$ defined in Theorem \ref{mon} push $\mu_1$ to $\mu_i$ for all $i=2,3,..m$.  Pick a Borel set $B \in  M_i$.  We have

\begin{eqnarray*}
 \mu_i(B)&=&\mu(M_1\times M_2 \times...\times M_{i-1} \times B \times M_{i+1} \times ...\times M_m) \\
 &=&\mu\Big((M_1\times M_2 \times...\times M_{i-1} \times B \times M_{i+1} \times ...\times M_m) \cap \text{graph}(\vec{G})\Big)\\
&=& \mu\Big(\{(x_1,G_2(x_1),...,G_m(x_1) | G_i(x_1) \in B\}\Big)\\
&=& \mu\Big((G_i^{-1}(B) \times M_2 \times ...\times M_m) \cap \text{graph}(\vec{G})\Big)\\
&=& \mu (G_i^{-1}(B) \times M_2 \times ...\times M_m )\\
&=& \mu_1(G_i^{-1}(B))
\end{eqnarray*}
  This implies that $(G_2,G_3,...,G_m)$ solves \textbf{M}.  To prove uniqueness of $\mu$, note that any other optimizer $\overline{\mu}$ must also be concentrated on graph$(\vec{G})$, which in turn implies $\overline{\mu}=(\text{Id}, G_2,...,G_m)_{\#}\mu_1=\mu$.  Uniqueness of $(G_2,G_3,..G_m)$ now follows immediately; if $(\overline{G_2},\overline{G_3},...,\overline{G_m})$ is another solution to \textbf{M} then $(\text{Id}, \overline{G_2},\overline{G_3},...,\overline{G_m})_{\#}\mu_1$ is another solution to \textbf{K}, which must then be concentrated on graph$(\vec{G})$.  This means that $G_i=\overline G_i$, $\mu_1$ almost everywhere.
\end{proof}

\section{Examples}

In this section, we discuss several types of cost functions to which Theorem \ref{mon} applies.  In these examples, the complicated tensor $T$ simplifies considerably.

\newtheorem{GanSw}{Example}[section]
\begin{GanSw}(Perturbations of concave functions of the sum)
Gangbo and \'Swi\c ech \cite{GS} and Heinich \cite{H} treated cost functions defined on $(\mathbb{R}^n)^m$ by $c(x_1,x_2,...,x_m)= h(\sum_{k=1}^mx_k)$ where $h: \mathbb{R}^n \rightarrow \mathbb{R}$ is strictly concave.  Here, we make the slightly stronger assumption that $h$ is $C^2$ with $D^{2}h <0$.  Assuming each $\mu_i$ is compactly supported, we can take each $M_i$ to be a bounded, convex domain in $\mathbb{R}^n$.  Now, $D_{x_i}c(x_1,x_2,...,x_m) =Dh(\sum_{k=1}^mx_k)$ and $D^2_{x_ix_j}c(x_1,x_2,...,x_m))=D^2h(\sum_{k=1}^mx_k)$, where we have made the obvious identification between tangent spaces at different points.  $c$ is then clearly $(1,m)$-twisted and $(1,m)$-non-degenerate.  Furthermore, the bi-linear map $S_{\vec{y}}$ on  $(\mathbb{R}^n)^{m-2}$ is block diagonal, and each of its diagonal blocks is 
\begin{equation*}
D^2h(\sum_{k=1}^my_k).
\end{equation*}  
Similarly, as $\hess{i}c(\vec{y}(i))=D^2h(\sum_{k=1}^my_k(i))$ and $\hess{i}c(\vec{y})=D^2h(\sum_{k=1}^my_k)$, $H_{\vec{y},\vec{y}(2),\vec{y}(3),...,\vec{y}(m-1)}$ is block diagonal and its $i$th diagonal block is
\begin{equation*} 
D^2h(\sum_{k=1}^my_k(i))-D^2h(\sum_{k=1}^my_k).
\end{equation*}
Therefore, $T_{\vec{y},\vec{y}(2),\vec{y}(3),...,\vec{y}(m-1)}$ is block diagonal and its $ith$ diagonal block is 
\begin{equation*}
D^2h(\sum_{k=1}^my_k(i)).
\end{equation*}
This is clearly negative definite. Furthermore, $C^2$ perturbations of this cost function will also satisfy $T_{\vec{y},\vec{y}(2),\vec{y}(3),...,\vec{y}(m-1)} < 0$; this shows that the results of Gangbo and \'Swi\c ech and Heinich are robust with respect to perturbations of the cost function.
\end{GanSw}
\newtheorem{BL}[GanSw]{Example}
\begin{BL} (Bi-linear costs)
We now turn to bi-linear costs; suppose $c:(\mathbb{R}^n)^m \rightarrow \mathbb{R}$ is given by $c(x_1,x_2,...,x_m) =\sum_{i\neq j}(x_i)^TA_{ij}x_j$ for $n$ by $n$ matrices $A_{ij}$.  If $A_{1m}$ is non-singular, $c$ is $(1,m)$-twisted and $(1,m)$-non-degenerate.  Now, the Hessian terms in $T$ vanish and so the condition $T<0$ becomes a condition on the $A_{ij}$.  For example, when $m=3$, we have $T =A_{21}(A_{31})^{-1}A_{32}$; $T <0$ is the same condition that ensures the solution to \textbf{K} is contained in an $n$ dimensional submanifold in \cite{P}.  

Note that after changing coordinates in $x_2$ and $x_3$, we can assume any bi-linear three-marginal cost is of the form 
\begin{equation*}
c(x_1,x_2,x_3)=x_1 \cdot x_2 + x_1 \cdot x_3+ x_2^T A x_3
\end{equation*}
In these coordinates, the threefold product $A_{21}(A_{31})^{-1}A_{32}=A^T$.  Applying the linear change of coordinates
\begin{eqnarray*}
x_1 \mapsto U_1x_1 \\ 
x_2 \mapsto U_2x_2\\
x_3 \mapsto U_3x_3
\end{eqnarray*}
yields
\begin{equation*}
c(x_1,x_2,x_3)=x_1^TU_1^T U_2x_2 + x_1^TU_1^T U_3x_3+ x_2^TU_2^T A U_3x_3
\end{equation*}
If $A$ is negative definite and symmetric, then we can choose $U_3=U_2$ such that $U_2^T A U_3=-I$ and $U_1=-(U_2^T)^{-1}$ to obtain
\begin{equation*}
c(x_1,x_2,x_3)=-x_1^Tx_2 - x_1^Tx_3- x_2^Tx_3
\end{equation*}
which is equivalent \footnote{We say cost functions $c$ and $\overline{c}$ are equivalent if $\overline{c}(x_1,x_2,...,x_m) = c(x_1,x_2,...,x_m) +\sum_{i=1}^m g_i(x_i)$.  As the effect of the $g_i$'s is to shift the functionals $C(G_2,G_3,...,G_m)$ and $C(\mu)$ by the constant $\sum_{i=1}^{m}\int_{M_i}g_i(x_i)d\mu_i$, studying $c$ is essentially equivalent to studying $\overline{c}$.} to the cost of Gangbo and \'Swi\c ech.  As the symmetry of $D^2_{x_2x_1}c(D^2_{x_3x_1}c)^{-1}D^2_{x_3x_2}c$ is independent of our choice of coordinates, we conclude that $c$ is equivalent to Gangbo and \'Swi\c ech's cost if and only if $A_{21}(A_{31})^{-1}A_{32}$ is symmetric and negative definite.  Thus, when $m=3$ our result restricted to bi-linear costs generalizes Gangbo and \'Swi\c ech's theorem from costs for which $A_{21}(A_{31})^{-1}A_{32}$ is symmetric and negative definite to ones for which it is only negative definite. 
\end{BL}

\newtheorem{3M}[GanSw]{Example}
\begin{3M}
 There is another class of three marginal problems which Theorem \ref{mon} applies to: on $\mathbb{R}^n \times \mathbb{R}^n \times \mathbb{R}^n$, set 
\begin{equation*}
c(x_1,x_2,x_3)=g(x_1,x_3)+\frac{|x_1-x_2|^2}{2} +\frac{|x_3-x_2|^2}{2}.
\end{equation*}
If $g(x_1,x_3)=\frac{|x_1-x_3|^2}{2}$, this is equivalent to the cost of Gangbo and \'Swi\c ech.  More generally, if $g$ is $(1,3)$-twisted and non-degenerate, then $c$ is as well.  Moreover, if we make the usual identification between tangent spaces at different points in $\mathbb{R}^n$, we have
\begin{equation*}
 T_{\vec{y},\vec{y}(2)}=\big(D^2_{x_1x_3}g(y_1,y_3)\big)^{-1}.
\end{equation*}
Hence, if $D^2_{x_1x_3}g(y_1,y_3) <0$, we have $T_{\vec{y},\vec{y}(2)}<0$.  This will be the case if, for example, $g(x_1,x_3)=h(x_1-x_3)$ for $h$ uniformly convex or $g(x_1,x_3)=h(x_1+x_3)$ for $h$ uniformly concave.
\end{3M}

\newtheorem{hedonic}[GanSw]{Example}
\begin{hedonic}(Hedonic Pricing) Chiappori, McCann and Nesheim \cite{CMN} and Carlier and Ekeland \cite{CE} showed that finding equilibrium in a certain hedonic pricing model is equivalent to solving a multi-marginal optimal transportation problem with a cost function of the form $c(x_1,x_2,...,x_m) = \inf_{z \in Z}\sum_{i=1}^m f_i( x_i,z)$.  Let us assume:
\begin{enumerate}
\item $Z$ is a $C^2$ smooth $n$-dimensional manifold.
\item For all $i$,  $f_i$ is $C^{2}$ and non-degenerate. 
\item For each $(x_1,x_2,...,x_m)$ the infinum is attained by a unique $z(x_1,x_2,...,x_m) \in Z$ and
\item $\sum_{i=1}^m D^2_{zz}f_i(x_i,z(x_1,x_2,...,x_m))$ is non-singular.
\end{enumerate}
In \cite{P}, we showed that these conditions implied that $c$ is $C^2$ and $(i,j)$-non-degenerate for all $i \neq j$; we then showed that the support of any optimizer is contained in an $n$-dimensional Lipschitz submanifold of the product $M_1 \times M_2 \times ...M_m$.  Here we examine conditions on the $f_i$ that ensure the hypotheses of Theorem \ref{mon} are satisfied.  If, for fixed $i \neq j$, we assume in addition that:
\begin{enumerate}
\setcounter{enumi}{4}
 \item $f_i$ is $x_i, z$ twisted (that is, $z \mapsto D_{x_i}f_i(x_i,z)$ is injective) and
\item $f_j$ is $z,x_j$ twisted.
\end{enumerate}
then $c$ is $(i,j)$-twisted.  Indeed, note that $c(x_1,x_2,...,x_m) \leq \sum_{i=1}^m f_i( x_i,z))$ with equality when $z=z(x_1,x_2,...,x_m)$; therefore, 
\begin{equation}\label{max}
 D_{x_i}c(x_1,x_2,...,x_m)=D_{x_i}f(x_i, z(x_1,x_2,...,x_m))
\end{equation}
Therefore, for fixed $x_k$ for all $k \neq j$, the map $x_j \mapsto  D_{x_i}c(x_1,x_2,...,x_m)$ is the composition of the maps $x_j \mapsto z(x_1,x_2,...,x_m)$ and $z \mapsto D_{x_i}f(x_i, z)$.  The later map is injective by assumption. Now, note that 
\begin{equation*}
 \sum_{k=1}^m D_{z}f_i(x_i,z(x_1,x_2,...,x_m))=0;
\end{equation*}
 hence, 
\begin{equation*}
D_zf_j(x_j,z(x_1,x_2,...,x_m))=-\sum_{k \neq j} D_{z}f_k(x_k,z(x_1,x_2,...,x_m)).
\end{equation*}
  Twistedness of $f_j$ now immediately implies injectivity of the first map.
 
We now investigate the form of the tensor $T$.

 As $A(x_1,x_2,...,x_m):=\sum_{i=1}^m D^2_{zz}f_i(x_i,z(x_1,x_2,...,x_m))$ is non-singular by assumption, the implicit function theorem implies that $z(x_1,x_2,...,x_m)$ is differentiable and
\begin{equation*} 
D_{x_i}z(x_1,x_2,...,x_m)=-\big(A(x_1,x_2,...,x_m)\big)^{-1}D^2_{zx_i}f_i(x_i,z(x_1,x_2,...,x_m))
 \end{equation*}
Furthermore, note that as $A$ is positive semi-definite by the minimality of $z \mapsto \sum_{i=1}^m f_i( x_i,z))$ at $z(x_1,x_2,...,x_m)$, the non-singular assumption implies that it is in fact positive definite.

Differentiating (\ref{max}) with respect to $x_i$ for $i=2,3,..m-1$ yields:
\begin{equation*}
\hess{i}c=-(D^2_{x_iz}f_i)D_{x_i}z + \hess{i}f_i=-(D^2_{x_iz}f_i)A^{-1}(D^2_{zx_i}f_i) + \hess{i}f_i.
\end{equation*}
where we have suppressed the arguments $x_1,x_2,..x_m$ and $z(x_1,x_2,...,x_m)$. A similar calculation yields, for all $i \neq j$,
\begin{equation*}
D^2_{x_ix_j}c=(D^2_{x_iz}f_i)D_{x_j}z=-(D^2_{x_iz}f_i)A^{-1}(D^2_{zx_j}f_i)
\end{equation*}
Thus, for all $i \neq j$, a straightforward calculation yields
\begin{eqnarray*}
 D^2_{x_ix_m}c(D^{2}_{x_1x_m}c)^{-1}D^2_{x_1x_j}c=-(D^2_{x_iz}f_i)A^{-1}(D^2_{zx_j}f_i)=D^2_{x_ix_j}c,
\end{eqnarray*}
Hence, $S_{\vec{y}}$ is block diagonal.  Furthermore, another simple calculation implies that its $i$th diagonal block is
\begin{equation*}
  \Big[D^2_{x_ix_m}c(D^{2}_{x_1x_m}c)^{-1}D^2_{x_1x_i}c\Big]\Big(\vec{y}\Big)=-\Big[(D^2_{x_iz}f_i)A^{-1}(D^2_{zx_i}f_i)\Big]\Big(\vec{y},z(\vec{y})\Big).
\end{equation*}
In addition, $H_{\vec{y},\vec{y}(2),\vec{y}(3),...,\vec{y}(m-1)}$ is block diagonal and its $i$th block is 
\begin{eqnarray*}
-\Big[(D^2_{x_iz}f_i)A^{-1}(D^2_{zx_i}f_i)\Big]\Big(\vec{y}(i),z\big(\vec{y}(i)\big)\Big)+ \hess{i}f_i\Big(y_i,z\big(\vec{y}(i)\big)\Big)\\
\text{      }+\Big[(D^2_{x_iz}f_i)A^{-1}(D^2_{zx_i}f_i)\Big]\Big(\vec{y},z(\vec{y})\Big)-\hess{i}f_i\Big(y_i,z(\vec{y})\Big)
\end{eqnarray*}
Hence, $T_{\vec{y},\vec{y}(2),\vec{y}(3),...,\vec{y}(m-1)}$ is block diagonal and its $i$th block is 
\begin{equation}\label{T}
 -\Big[(D_{x_iz}f_i)A^{-1}(D^2_{zx_i}f_i)\Big]\Big(\vec{y}(i),z\big(\vec{y}(i)\big)\Big)+\hess{i}f_i\Big(y_i,z\big(\vec{y}(i)\big)\Big)-\hess{i}f_i\Big(y_i,z(\vec{y})\Big)
\end{equation}
Therefore, $T_{\vec{y},\vec{y}(2),\vec{y}(3),...,\vec{y}(m-1)}$ is negative definite if and only if each of its diagonal blocks is. Now, $A$ is symmetric and positive definite; therefore $A^{-1}$ is as well.  The first term in the $i$th block of (\ref{T}) is therefore negative definite; the entire block will be negative definite if this term dominates the difference of the Hessian terms.  This is the case if, for example, $M_i=\mathbb{R}^n$ and $f_i$ takes the form $f_i(x_i,z)=x_i\alpha_i(z)+\beta_i(x_i)+\lambda_i(z)$ for all $i=2,3,...,m-1$, in which case $\hess{i}f_i\Big(y_i,z\big(\vec{y}(i)\big)\Big)=\hess{i}f_i\Big(y_i,z(\vec{y})\Big)$.
\end{hedonic}

\end{document}